\newtheorem{thm}{Theorem}[]
\newtheorem{rem}[]{Remark}
 \newcommand{\thmref}[1]{Theorem~\ref{#1}}
\newcommand{\R}{{\mathbb R}}
\newcommand{\dl}{{\delta}}
\newcommand{\bee}{\begin{equation*}}
\newcommand{\eee}{\end{equation*}}
\newcommand{\be}{\begin{equation}}
\newcommand{\ee}{\end{equation}}
\newcommand{\pn}{\par\noindent}
\title{A nonlinear inequality and evolution problems}
\author{A.G. Ramm \\
\small Department of Mathematics\\[-0.8ex]
\small Kansas State University, Manhattan, KS 66506-2602, USA\\
\small \texttt{ramm@math.ksu.edu}}
\date{}
\begin{document}

Journal of Inequalities and Special Functions (JIASF), 1, N1, (2010), 1-9.

\maketitle

\begin{abstract}
Assume that $g(t)\geq 0$,  and  $$\dot{g}(t)\leq
-\gamma(t)g(t)+\alpha(t,g(t))+\beta(t),\ t\geq 0;\quad g(0)=g_0;\quad
\dot{g}:=\frac{dg}{dt}, $$
on any interval $[0,T)$ on which $g$ exists and has bounded derivative
from the right, $\dot{g}(t):=\lim_{s\to +0}\frac{g(t+s)-g(t)}{s}$.  
It is assumed that  $\gamma(t)$, 
and $\beta(t)$ are nonnegative continuous 
functions of $t$ defined on
$\R_+:=[0,\infty)$,  the function $\alpha(t,g)$ is
defined for all $t\in \R_+$,  locally Lipschitz 
with
respect to $g$ uniformly with respect to $t$ on any compact subsets
$[0,T]$, $T<\infty$,   and non-decreasing with respect to $g$, 
$\alpha(t,g_1)\geq \alpha(t,g_2)$ if $g_1\ge g_2$. If 
there
exists a function $\mu(t)>0$,  $\mu(t)\in C^1(\R_+)$, such that
$$\alpha\left(t,\frac{1}{\mu(t)}\right)+\beta(t)\leq
\frac{1}{\mu(t)}\left(\gamma(t)-\frac{\dot{\mu}(t)}{\mu(t)}\right),\quad
\forall t\ge 0;\quad \mu(0)g(0)\leq 1,$$ then $g(t)$ exists on all of 
$\R_+$, that is $T=\infty$,  and the following estimate holds:
$$0\leq g(t)\le \frac 1{\mu(t)},\quad \forall t\geq 0. $$ 
If $\mu(0)g(0)< 1$, then $0\leq g(t)< \frac 1{\mu(t)},\quad \forall t\geq 
0. $

A discrete version of this  result is obtained.

The nonlinear inequality, obtained in this paper,  is used
in a study of the Lyapunov stability and asymptotic stability
of solutions to differential
equations in finite and infinite-dimensional spaces.
\end{abstract}
\pn{\\ {\em MSC:}\,\, 26D10;34G20; 37L05;44J05; 47J35  \\

\noindent\textbf{Key words:} nonlinear inequality; Lyapunov 
stability; evolution problems; differential equations. }\\

\section{Introduction}
The goal of this paper is to give a self-contained proof of an
estimate for solutions of a nonlinear inequality \be\label{e1}
\dot{g}(t)\leq -\gamma(t)g(t)+\alpha(t,g(t))+\beta(t),\ t\geq 0;\
g(0)=g_0;\ \dot{g}:=\frac{dg}{dt},  \ee
and to demonstrate some of its many possible applications.

Denote $\R_+:=[0,\infty)$.
It is  not assumed a priori that solutions $g(t)$ to inequality
\eqref{e1} are defined on all of $\R_+$, that is, that  these solutions 
exist globally. We give sufficient conditions for the global existence of 
$g(t)$.
Moreover, under these conditions  a bound on $g(t)$ is given, see
estimate \eqref{e5} in Theorem 1. 
This bound yields the relation $\lim_{t\to \infty}g(t)=0$
if $\lim_{t\to \infty}\mu(t)=\infty$ in \eqref{e5}. 

Let us formulate
our assumptions.

\noindent {\it Assumption A).} We assume that the function $g(t)\geq 0$
is defined on some interval $[0,T)$, has a bounded derivative
$\dot{g}(t):=\lim_{s\to +0}\frac{g(t+s)-g(t)}{s}$ from the 
right at any point of this interval, and  $g(t)$ satisfies 
inequality \eqref{e1} at all $t$ at which $g(t)$ is defined. The 
functions 
$\gamma(t)$, and $\beta(t)$,  are continuous, non-negative,
defined on all of $\R_+$.  The function 
$\alpha(t,g)\ge 0$ is 
continuous on $\R_+\times \R_+$, nondecreasing with respect to $g$,
and locally Lipschitz with respect to $g$. This 
means that $\alpha(t,g)\ge \alpha(t,h)$ if $g\ge h$, and 
\be\label{e2}
|\alpha(t,g)-\alpha(t,h)|\leq L(T,M)|g-h|,
\ee
if $t\in[0,T]$, $|g|\leq M$ and $|h|\leq  M$, $M=const>0$,
where $L(T,M)>0$ is a constant independent of $g$, $h$, and $t$.

\noindent {\it Assumption B).} There exists a $C^1(\R_+)$
function $\mu(t)>0$,  such that \be\label{e3}
\alpha\left(t,\frac{1}{\mu(t)}\right)+\beta(t)\leq
\frac{1}{\mu(t)}\left(\gamma(t)-\frac{\dot{\mu}(t)}{\mu(t)}\right),\quad 
\forall t\ge 0, \ee
\be\label{e4} \mu(0)g(0)< 1. \ee

If $\mu(0)g(0)\le 1$, then the 
inequality sign $< \frac 1 {\mu(t)}$ in Theorem 
1, in formula \eqref{e5}, is replaced by $\le \frac 1 {\mu(t)}$. 

Our results are formulated in Theorems 1 and 2, and {\it Propositions 
1,2}.
{\it Proposition 1} is related to Example 1, and {\it Proposition 2}
is related to Example 2, see below.

\begin{thm}\label{thm1}
If Assumptions A) and B) hold, then any solution $g(t)\ge 0$ 
to inequality \eqref{e1} exists on all of $\R_+$, i.e., $T=\infty$, 
and satisfies the following estimate:
\be\label{e5}0\leq g(t)<\frac{1}{\mu(t)}\quad \forall t\in \R_+. \ee
If $ \mu(0)g(0)\le 1$, then $0\leq g(t)\le \frac{1}{\mu(t)}\quad \forall 
t\in \R_+.$
\end{thm}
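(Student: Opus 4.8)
The plan is to argue by a comparison/continuity method. Set $w(t):=1/\mu(t)$, which is the candidate barrier. A direct computation gives $\dot w = -\dot\mu/\mu^2 = -w(\gamma - (\gamma - \dot\mu/\mu))$, and Assumption B), inequality \eqref{e3}, rearranges exactly to
\be\label{barrier}
\dot w(t) \ge -\gamma(t)w(t) + \alpha(t,w(t)) + \beta(t),\quad \forall t\ge 0.
\ee
Thus $w$ is a supersolution of the associated ODE while $g$ is a subsolution, via \eqref{e1}, and \eqref{e4} gives the strict initial ordering $g(0) < w(0)$. So the heart of the matter is a differential-inequality comparison showing that the strict ordering $g(t) < w(t)$ is preserved for as long as $g$ exists.

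First I would establish the bound on the maximal interval $[0,T)$ where $g$ is known to exist. Define $T^* := \sup\{\tau \in [0,T): g(s) < w(s)\ \forall s\in[0,\tau]\}$, which is positive since $g(0) < w(0)$ and both functions are continuous. I would argue $T^* = T$ by contradiction: if $T^* < T$, then by continuity $g(T^*) = w(T^*)$ and $g(s) < w(s)$ for $s < T^*$. At the contact point $t=T^*$ I would compare the right derivatives. Since $g(T^*) = w(T^*)$, monotonicity of $\alpha$ in its second argument gives $\alpha(T^*, g(T^*)) = \alpha(T^*, w(T^*))$, and subtracting \eqref{e1} from \eqref{barrier} yields
\be\label{deriv}
\dot w(T^*) - \dot g(T^*) \ge -\gamma(T^*)\bigl(w(T^*) - g(T^*)\bigr) = 0,
\ee
so $\frac{d}{dt}(w-g)\ge 0$ at the right derivative at $T^*$. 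This nonstrict inequality is the delicate point: to promote the ordering I would instead work with a small perturbation, replacing $w$ by $w_\varepsilon$ solving the strict version, or equivalently observe that because the contact is a first touch from below, $w-g$ is nonincreasing to the left of $T^*$ in a way incompatible with \eqref{deriv} unless the touch never occurs; the Lipschitz bound \eqref{e2} on $\alpha$ controls the difference quotient of $\alpha(\cdot,g)-\alpha(\cdot,w)$ near the contact and lets me close the argument rigorously.

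I expect the main obstacle to be exactly this handling of the contact point with only one-sided (right) derivatives and a merely locally Lipschitz, nondecreasing $\alpha$. The clean way around it is the $\varepsilon$-trick: for each $\varepsilon>0$ let $\mu_\varepsilon$ be chosen so that $w_\varepsilon := 1/\mu_\varepsilon$ satisfies the strict inequality in \eqref{barrier} with $w_\varepsilon(0) > g(0)$, prove $g < w_\varepsilon$ on $[0,T)$ where the strict derivative comparison rules out any first touch, and then let $\varepsilon\to 0$ to recover $g\le w$, upgrading to strict under the strict hypothesis \eqref{e4}. Once the bound $g(t) < 1/\mu(t)$ is secured on $[0,T)$, global existence follows: the bound keeps $g$ (and its right derivative, controlled through \eqref{e1} by the continuous $\gamma,\beta$ and the bounded $\alpha(t,g)$) uniformly bounded on every finite interval, so $g$ cannot blow up in finite time and hence $T=\infty$. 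The boundary case $\mu(0)g(0)\le 1$ then yields the nonstrict estimate $0\le g(t)\le 1/\mu(t)$ by the same limiting procedure.
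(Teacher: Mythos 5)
Your overall strategy---treating $1/\mu(t)$ as a barrier, observing that \eqref{e3} says exactly that $w:=1/\mu$ is a supersolution while $g$ is a subsolution of the associated equation, and propagating the strict initial ordering \eqref{e4} up to a hypothetical first contact time---is the same comparison argument the paper uses. The paper merely performs it after multiplying by the integrating factor $a(t)=e^{\int_0^t\gamma(s)ds}$, setting $v=ag$ and $\eta=a/\mu$, so that the comparison takes the form $\dot v\le\dot\eta$ with $v(0)<\eta(0)$. You also correctly isolate the delicate point: at a first touch the derivative comparison is only non-strict, so no contradiction is available from the contact point alone.

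However, the first of your two proposed repairs fails as stated. There is no incompatibility between having $w-g>0$ on $[0,T^*)$, $(w-g)(T^*)=0$, and a non-negative right derivative of $w-g$ at $T^*$ (consider $h(t)=(T^*-t)^2$ near $T^*$, which has $\dot h(T^*)=0$). The contradiction must be extracted from the whole interval $[0,T^*)$, not from the touch point: there $g<w$, so monotonicity of $\alpha$ gives $\alpha(t,g)\le\alpha(t,w)$, hence $\frac{d}{dt}(w-g)\ge-\gamma(t)\,(w-g)$, and Gronwall yields $(w-g)(T^*)\ge\bigl(w(0)-g(0)\bigr)e^{-\int_0^{T^*}\gamma(s)ds}>0$, contradicting the touch. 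This is exactly what the paper's substitution accomplishes: on the pre-contact interval one gets $\dot v\le\dot\eta$ outright, so integration preserves the strict initial gap $\eta(0)-v(0)>0$ with no Gronwall factor needed. Your $\varepsilon$-perturbation alternative can also be made rigorous, but constructing a strict supersolution $w_\varepsilon>w$ requires the local Lipschitz bound \eqref{e2} (since $\alpha$ is nondecreasing, enlarging $w$ enlarges $\alpha(t,w)$), and it is more work than the Gronwall step. Finally, your global-existence step (``the a priori bound precludes blow-up'') is the right idea but is asserted rather than proved: for a differential inequality one must say what object is being extended. The paper handles this by introducing the exact upper solution of the associated ODE \eqref{e33} and proving (its Propositions 3--5) that a locally Lipschitz equation whose solution is a priori bounded on $[0,T]$ extends past $T$; that step, or an equivalent one, needs to appear in your argument.
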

\begin{rem}\label{rem1}
If $\lim_{t\to \infty} \mu(t)=\infty$, then $\lim_{t\to
\infty}g(t)=0$. 
\end{rem}
Let us explain how one applies estimate \eqref{e5} in various
problems (see also papers \cite{R593}, \cite{R558},  and the monograph 
\cite{R499}
for other applications of differential inequalities which are
particular cases of inequality \eqref{e1}).

\noindent {\it Example 1.} Consider the problem 
\be\label{e6}
\dot{u}=A(t)u+B(t)u,\quad u(0):=u_0, \ee 
where $A(t)$ is a linear
bounded operator in a Hilbert space $H$ and $B(t)$ is a bounded
linear operator such that 
$$\int_0^\infty \|B(t)\|dt:=C<\infty.$$
Assume that 
\be\label{e7} \text{Re}(A(t)u,u)\leq 0\quad \forall u\in
H,\ \forall t\geq 0.\ee
Operators satisfying inequality \eqref{e7} are called {\it dissipative}.
They arise in many applications, for example in a study of passive
linear and nonlinear networks (e.g., see \cite{R129}, and \cite{R118},
Chapter 3).

One may consider some classes of unbounded linear operator using 
the scheme developed in the proofs of {\it Propositions 1,2}.  For 
example,
in {\it Proposition 1} the operator $A(t)$ can be a generator of $C_0$ 
semigroup
$T(t)$ such that $\sup_{t\ge 0}\|T(t)\|\le m$, where $m>0$ is a constant.

Let $A(t)$ be a linear closed, densely defined in $H$, dissipative 
operator, with domain of definition $D(A(t))$ independent of $t$, and $I$ 
be the 
identity operator in $H$. Assume that
the Cauchy problem    
$$\dot{U}(t)=A(t)U(t),\quad U(0)=I,$$
for the operator-valued function $U(t)$
has a unique global solution
and  
$$\sup_{t\ge 0}\|U(t)\|\le m,$$ where $m>0$ is a constant.
Then such an unbounded operator $A(t)$ can be used in {\it Example 1}. 

\noindent {\it Proposition 1.} {\it If condition  \eqref{e7} holds and  
$C:=\int_0^\infty \|B(t)\|dt<\infty$, then the 
solution to problem \eqref{e6} exists on $\R_+$, is unique,  and 
satisfies the following inequality:
\be\label{e8}
\sup_{t\geq 0}\|u(t)\|\leq e^C\|u_0\|. \ee
}

Inequality \eqref{e8} implies Lyapunov stability of the zero solution
to equation \eqref{e6}. 

Recall that the zero solution to
equation \eqref{e6} is called Lyapunov stable if for any $\epsilon>0$,
however small, one can find a $\delta=\delta(\epsilon)>0$, such that if
$\|u_0\|\le \delta$, then the solution to Cauchy problem
\eqref{e6} satisfies the estimate $\sup_{t\ge 0}\|u(t)\|\le
\epsilon$. If, in addition, $\lim_{t\to \infty}\|u(t)\|=0$, then
the zero solution to equation \eqref{e6} is called asymptotically stable
in the Lyapunov sense.

\noindent {\it Example 2.} Consider an abstract nonlinear
evolution problem 
\be\label{e9}
\dot{u}=A(t)u+F(t,u)+b(t),\quad  u(0)=u_0,\ee 
where $u(t)$ is a function with values in a Hilbert space $H$, $A(t)$ 
is a linear bounded operator in $H$ which satisfies inequality 
\be\label{e10} \text{Re}(Au,u)\leq
-\gamma(t)\|u\|^2,\quad t\geq 0;\qquad \gamma=\frac{r}{1+t}, \ee  
$r>0$ is a constant,  
$F(t,u)$ is a nonlinear map in $H$, and the following estimates hold:
\be\label{e11} \|F(t,u)\|\leq \alpha(t,g),\quad g:=g(t):=\|u(t)\|; \quad
\|b(t)\|\leq \beta(t), 
\ee
where $\beta(t)\ge 0$ and $\alpha(t,g)\ge 0$ 
satisfy the conditions in {\it Assumption A)}. 

Let us assume that \be\label{e12}
\alpha(t,g)\leq c_0g^p,\quad p>1;\quad \beta(t)\leq
\frac{c_1}{(1+t)^{\omega}}, \ee 
where $c_0$, $p$, $\omega$ and $c_1$
are positive constants.\\

\noindent {\it Proposition 2.} {\it If conditions \eqref{e9}-\eqref{e12} 
hold, 
and inequalities  \eqref{e20},\eqref{e21} and \eqref{e23} are satisfied
(see these inequalities in the proof of {\it Proposition 2}),
then
the solution to the evolution problem  \eqref{e9} exists on all of $\R_+$ 
and satisfies the following estimate:
\be\label{e13} 0\leq \|u(t)\|\leq \frac{1}{\lambda(1+t)^q},\qquad
\forall t\geq 0, \ee where $\lambda$ and $q$ are some positive 
constants the choice of which is specified by inequalities  
\eqref{e20},\eqref{e21} and \eqref{e23}.}

The choice of $\lambda $ and $q$ is motivated and explained in the proof 
of {\it Proposition 2}
(see inequalities \eqref{e20}, \eqref{e21} and \eqref{e23} in
Section 2).

Inequality \eqref{e13} implies asymptotic stability of the solution
to problem \eqref{e9} in the sense of Lyapunov and, additionally, gives 
a rate of convergence of $\|u(t)\|$ to zero as $t\to \infty$.

The results in {\it Examples 1,2} can be obtained in Banach space,
but we do not go into detail. 

Proofs of Theorem 1 and  {\it Propositions 1} and {2 } are given in 
Section 2.
Theorem 2, which is a discrete analog of Theorem 1, is formulated and 
proved in Section 3.

\section{Proofs}
\noindent {\it Proof of Proposition 1.} Local existence of the solution 
$u(t)$ 
to problem  \eqref{e6} is known (see, e.g., \cite{DK}). Uniqueness of 
this solution follows from the linearity of the problem and from 
estimate  \eqref{e8}. Let us prove this estimate.  

 Multiply \eqref{e6} by
$u(t)$, let $g(t):=\|u(t)\|$, take real part, use \eqref{e7}, and get
$$\frac{1}{2}\frac{d g^2(t)}{dt}\leq \text{Re}(B(t)u(t),u(t))\leq
\|B(t)\|g^2(t).$$ This implies $g^2(t)\leq g^2(0)e^{2C}$, 
so \eqref{e8} follows. {\it Proposition 1} is proved. \hfill $\Box$

\noindent {\it Proof of Proposition 2.} The local existence and uniqueness 
of 
the solution $u(t)$ to  problem \eqref{e9} follow from {\it Assumption 
A} (see, e.g., \cite{DK}). The existence of  $u(t)$ for all $t\ge 0$,
that is, the global existence of  $u(t)$, follows from estimate 
\eqref{e13} (see, e.g., \cite{R499}, pp.167-168).

Let us derive estimate \eqref{e13}.
Multiply \eqref{e9} by
$u(t)$, let $g(t):=\|u(t)\|$, take real part, use
\eqref{e10}-\eqref{e12} and get \be\label{e14} g\dot{g}\leq
-\gamma(t)g^2(t)+\alpha(t,g(t))g(t)+\beta(t)g(t),\ t\geq 0. \ee 
Since $g\ge 0$, one obtains from this inequality inequality 
\eqref{e1}. However, first we would like to explain in detail
the meaning of the derivative $\dot{g}$ in our proof.

By $\dot{g}$ the right derivatives is understood: 
$$\dot{g}(t):=\lim_{s\to +0}\frac{g(t+s)-g(t)}{s}.$$ 
If $g(t)=\|u(t)\|$
and $u(t)$ is continuously differentiable, then
$\psi(t):=g^2(t)=(u(t),u(t))$ is continuously differentiable, and its 
derivative at the point $t$ at which $g(t)>0$ can be computed by the 
formula: 
$$\dot{g}= Re (\dot{u}(t),u^0(t)),$$ where $u^0(t):=\frac 
{u(t)}{\|u(t)\|}$ . Thus,  
the function $g(t)=\sqrt{\psi(t)}$ is continuously
differentiable at any point at which $g(t)\neq 0$. At a point $t$ at
which $g(t)=0$, the vector $u^0(t)$ is not defined, the
derivative of $g(t)$ does not exist in the usual sense, but the right 
derivative of $g(t)$ still exists and can be calculated explicitly:
\bee\begin{split} \dot{g}(t)&=\lim_{s\to
+0}\frac{\|u(t+s)\|-\|u(t)\|}{s}=\lim_{s\to
+0}\frac{\|u(t)+s\dot{u}(t)+o(s)\| }{s}\\
&=\lim_{s\to 0}\|\dot{u}(t)+o(1)\|=\|\dot{u}(t)\|.
\end{split}\eee
If $u(t)$ is continuously differentiable at some point $t$,
 and $u(t)\neq 0$, then
$$\dot{g}=\|u(t)\|^.\leq \|\dot{u}(t)\|.$$ 
Indeed, 
\bee
2g(t)\dot{g}(t)=(\dot{u}(t),u(t))+(u(t),\dot{u}(t))\leq
2\|\dot{u}\|\|u\|=2\|\dot{u}(t)\|g(t). \eee 
If $g(t)\neq 0$, then
the above inequality implies $\dot{g}(t)\leq \|\dot{u}(t)\|$, as
claimed. One can also derive this inequality from the formula
$\dot{g}= Re (\dot{u}(t),u^0(t))$, since $|Re (\dot{u}(t),u^0(t))|\leq 
\|\dot{u}(t)\|$. 

If $g(t)>0$, then from \eqref{e14} one obtains
\be\label{e15} \dot{g}(t)\leq
-\gamma(t)g(t)+\alpha(t,g(t))+\beta(t),\quad t\ge 0. \ee If $g(t)=0$ on
an open set, then inequality \eqref{e15} holds on this set also,
because $\dot{g}=0$ on this set while the right-hand side of
\eqref{e15} is non-negative at $g=0$. If $g(t)=0$ at some point
$t=t_0$, then \eqref{e15} holds at $t=t_0$ because, as we have proved
above, $\dot{g}(t_0)=0$, while the right-hand side of \eqref{e15} is
equal to $\beta(t)\ge 0$ if $g(t_0)=0$, and is, therefore, non-negative if 
$g(t_0)=0$. 

If assumptions \eqref{e12} hold, then inequality \eqref{e15} can be 
rewritten as
\be\label{e16} \dot{g}\leq
-\frac{1}{(1+t)^\nu}g+c_0g^p+\frac{c_1}{(1+t)^\omega},\quad p>1. \ee Let
us look for $\mu(t)$ of the form \be\label{e17}
\mu(t)=\lambda(1+t)^q,\quad q=const>0,\quad \lambda=const>0. \ee
Inequality \eqref{e3} takes the form 
\be\label{e18}
\frac{c_0}{[\lambda(1+t)^q]^p} +\frac{c_1}{(1+t)^\omega}\leq
\frac{1}{\lambda(1+t)^q}\left(
\frac{r}{(1+t)^\nu}-\frac{q}{1+t}\right),\quad t>0,\ee 
or 
\be\label{e19}
\frac{c_0}{\lambda^{p-1}(1+t)^{q(p-1)}
}+\frac{c_1\lambda}{(1+t)^{\omega-q}}+\frac{q}{1+t}\leq
\frac{r}{(1+t)^\nu},\quad t>0 \ee 
Assume that the following inequalities \eqref{e20}-\eqref{e21} hold:
\be\label{e20} q(p-1)\geq
\nu,\quad \omega -q\geq \nu,\quad 1\geq \nu, \ee 
and 
\be\label{e21}
\frac{c_0}{\lambda^{p-1}}+c_1\lambda+q\leq r. \ee

Then inequality \eqref{e19} holds, and \thmref{thm1} yields \be\label{e22}
g(t)=\|u(t)\|<\frac{1}{\lambda(1+t)^q},\quad \forall t\geq 0, \ee 
provided
that \be\label{e23} \|u_0\|<\frac{1}{\lambda}. \ee 
Note that for any $\|u_0\|$
inequality \eqref{e23} holds if $\lambda$ is sufficiently large.
For a fixed $\lambda$, however large, inequality \eqref{e21} holds if $r$ 
is sufficiently large. 

{\it Proposition 2} is proved.\hfill $\Box$

The proof of {\it Proposition 2} provides a flexible
general scheme for obtaining estimates of the behavior of the solution
to evolution problem \eqref{e9} for $t\to \infty$.\\

\noindent {\it Proof of \thmref{thm1}.} Let 
\be\label{e24}
g(t)=\frac{v(t)}{a(t)},\quad a(t):=e^{\int_0^t\gamma(s)ds}, \ee
\be\label{e25} \eta(t):=\frac{a(t)}{\mu(t)},\quad
\eta(0)=\frac{1}{\mu(0)}>g(0). \ee Then
inequality \eqref{e1} reduces to 
\be\label{e26} \dot{v}(t)\leq
a(t)\alpha\left(t,\frac{v(t)}{a(t)}\right)+a(t)\beta(t),\quad t\geq 0; 
\quad
v(0)=g(0). \ee One has 
\be\label{e27}
\dot{\eta}(t)=\frac{\gamma(t)a(t)}{\mu(t)}-\frac{\dot{\mu}(t)a(t)}{\mu^2(t)}
=\frac{a(t)}{\mu(t)}\left(\gamma(t)-\frac{\dot{\mu}(t)}{\mu(t)}\right).
\ee From \eqref{e3}, \eqref{e24}-\eqref{e27}, one gets
\be\label{e28} v(0)<\eta(0),\quad \dot{v}(0)\leq \dot{\eta}(0). \ee
Therefore there exists a $T>0$ such that 
\be\label{e29}
0\leq v(t)<\eta(t),\quad \forall t\in[0,T). \ee Let us prove that
$T=\infty$. 

First, note that if inequality \eqref{e29} holds for
$t\in[0,T)$, 
or, equivalently, if
\be\label{e30}
0\leq g(t)<\frac 1 {\mu(t)}, \qquad \forall t\in[0,T),\ee 
then 
\be\label{e31}\dot{v}(t)\leq \dot{\eta}(t),\qquad \forall t\in[0,T).\ee
One can pass to the limit $t\to T-0$ in this inequality and get
\be\label{e32} \dot{v}(T)\leq \dot{\eta}(T). \ee
Indeed, from inequality  \eqref{e30} it follows that
$$\alpha\left(t,\frac {v}{a}\right)+\beta=\alpha(t,g)+\beta\leq 
\alpha(t,\frac 1 {\mu})+\beta,$$
because $\alpha(t,g)\le \alpha(t,\frac 1 {\mu})$.

Furthermore, from inequality \eqref{e3} one derives:
$$\alpha\left(t,\frac 1 {\mu}\right)+\beta\leq\frac 1 
{\mu(t)}\left(\gamma(t)-\frac{\dot{\mu}(t)}{\mu(t)}\right).$$

Consequently, from inequalities \eqref{e26}-\eqref{e27} one obtains
$$ \dot{v}(t)\leq
\frac{a(t)}{\mu(t)}\left(\gamma(t)-\frac{\dot{\mu}(t)}{\mu(t)}\right)
=\dot{\eta}(t),\qquad
t\in[0,T), $$
and inequality \eqref{e31} is proved.

Let $t\to T-0$ in \eqref{e31}. The function $\eta(t)$
is defined for all $t\in \R_+$ and $\dot{\eta}(t)$ is continuous on
$\R_+$. Thus, there exists the limit
$$\lim_{t\to T-0}\dot{\eta}(t)=\dot{\eta}(T).$$
By $\dot{v}(T)$ in inequality \eqref{e32} one may understand 
$\limsup_{t\to T-0}\dot{v}(t)$,
which does exist because $\dot{v}(t)$ is bounded for all $t<T$
by a constant independent of $t\in [0,T]$, due to the estimate 
\eqref{e31}. 

To prove that $T=\infty$ we prove that the "upper" solution
$w(t)$ to the inequality \eqref{e26} exists for all $t\in \R_+$.

Define $w(t)$ as the solution to
the problem 
\be\label{e33}
\dot{w}(t)=a(t)\alpha\left(t,\frac{w(t)}{a(t)}\right)+a(t)\beta(t),\quad
w(0)=v_0. \ee 
The unique solution to problem \eqref{e33} exists locally, on $[0,T)$, 
because $\alpha(t,g)$ is assumed locally Lipschitz. On the interval
$[0,T)$ one obtains inequality 
$$ 0\le v(t)\leq w(t), \qquad t\in[0,T),$$ 
by the standard comparison lemma
(see, e.g., \cite{R499}, p.99, or \cite{H}). Thus, inequality
\be\label{e34} 0\le v(t)\leq w(t)\leq \eta(t),\qquad t\in[0,T), \ee holds.

The desired conclusion $T=\infty$ one derives from
the following claim:

\noindent {\it Proposition 3.} {\it The solution $w(t)$ 
to problem \eqref{e33} exists on every
interval $[0,T]$ on which it is a priori bounded by 
a constant depending only on $T$.} 

We prove this claim later. 
Assuming that this claim is established, one concludes that
$T=\infty$.
Let us
finish the proof of \thmref{thm1} using {\it Proposition 3}. 

Since $\eta(t)$
is bounded on any interval $[0,T]$ ( by a constant depending only on  $T$)
one concludes from {\it Proposition 3} that $w(t)$ ( and, therefore, 
$v(t)$)
exists on all of $\R_+$. If $v(t)\leq \eta(t)$ $\forall t\in \R_+$, then
inequality \eqref{e5} holds (see \eqref{e24} and \eqref{e25}), and
\thmref{thm1} is proved.

 Let us prove {\it Proposition 3}.

\noindent {\it Proof of Proposition 3.} We prove a more general
statement, namely, {\it Proposition 4},  from which {\it Proposition 3} 
follows.\\

\noindent {\it Proposition 4.} {\it Assume that \be\label{e35}
\dot{u}=f(t,u),\quad u(0)=u_0, \ee where $f(t,u)$ is an operator
in a Banach space $X$, locally Lipschitz with respect to $u$ for
every $t$, i.e., $\|f(t,u)-f(t,v)\|\leq L(t,M)\|u-v\|$, $\forall
v,v\in \{u\ :\ \|u\|\leq M\}$. The unique solution to problem \eqref{e35}
exists for all $t\ge 0$ if and only if 
\be\label{e36} \|u(t)\|\leq
c(t), \quad t\geq 0, \ee where $c(t)$ is a continuous function
defined for all $t\geq 0$, and inequality \eqref{e36}
holds for all $t$ for which $u(t)$ exists.}

\noindent {\it Proof of Proposition 4.} The necessity of condition
\eqref{e36} is obvious: one may take $c(t)=\|u(t)\|$. 

To prove its
sufficiency, recall a known local existence theorem, 
see, e.g.,  \cite{DK}.

{\it Proposition 5.} {\it If $\|f(t,u)\|\leq M_1$ and 
$\|f(t,u)-f(t,v)\|\leq 
L\|u-v\|$,
$\forall t\in [t_0,t_0+T_1],$ $\|u-u_0\|\leq R,$ $u_0=u(t_0)$, then
there exists a $\dl>0$, $\dl=\min(\frac{R}{M_1}, \frac{1}{L},
T_1-T)$, such that for every $\tau_0\in [t_0,T]$, $T<T_1$, there
exists a unique solution to equation \eqref{e35} in the interval
$(\tau_0-\dl,\tau+\dl)$ and $\|u(t)-u(t_0)\|\leq R.$ 
}

Using   {\it Proposition 5}, let us prove the sufficiency of
the assumption \eqref{e36} for the global existence of $u(t)$,
i.e., for the existence of $u(t)$ for all
$t\ge t_0$. 

Assume that
condition \eqref{e36} holds and the solution to problem \eqref{e35} exists
on $[t_0,T)$ but does not exist on $[t_0,T_1)$ for any $T_1>T$. Let us
derive a contradiction from this assumption. 

 {\it Proposition 5} 
guarantees the existence and uniqueness of the solution to
problem \eqref{e35} with $t_0=T$ and the initial value $u_0=u(T-0).$
The value $u(T-0)$ exists if inequality \eqref{e36} holds, as we
prove below. The solution $u(t)$ exists on the interval
$[T-\dl,T+\dl]$ and, by the uniqueness theorem, coincides with the
solution $u(t)$ of the problem \eqref{e35} on the interval
$(T-\dl,T)$. Therefore, the solution to \eqref{e35} can be
uniquely extended to the interval
$[0,T+\dl)$, contrary to the assumption that it does not exist on the 
interval
$[0,T_1)$ with any $T_1>T$. This contradiction proves that
$T=\infty$, i.e., the solution to problem \eqref{e35} exists for all 
$t\geq t_0$ if estimate \eqref{e36} holds and $c(t)$ is defined and
continuous $\forall t\ge t_0$. 

Let us now prove the existence of the limit 
$$\lim_{t\to T-0}u(t):=u(T-0).$$ 
Let $t_n\to T$, $t_n<T$.
Then \bee \|u(t_n)-u(t_{n+m})\|\leq
\int_{t_n}^{t_{n+m}}\|f(t,u(s))\|ds\leq (t_{n+m}-t_n)M_1\to 0\text{
as }n\to\infty. \eee Therefore, by the Cauchy criterion, there exists
the limit 
$$\lim_{t_n\to T-0}u(t)=u(T-0).$$ Estimate \eqref{e36}
guarantees the existence of the constant $M_1$.

{\it Proposition 4} is proved \hfill   $\Box$

Therefore {\it Proposition 3} is also proved and, consequently, the 
statement of Theorem 1,
corresponding to the assumption \eqref{e5}, is proved. In our case 
$t_0=0$, but one may replace the initial moment $t_0=0$ in
\eqref{e1} by an arbitrary $t_0\in \R_+$.

Finally, if $g(0)\leq \frac 1 {\mu(0)}$,
then one proves the inequality 
$$0\le g(t)\le \frac 1 {\mu(t)}, \qquad
\forall t\in \R_+$$ using the argument similar to the above. 
This argument is left to the reader. 

\thmref{thm1} is proved.\hfill $\Box$

\section{Discrete version of \thmref{thm1}}
\begin{thm}\label{thm2}
Assume that $g_n\geq 0$, $\alpha(n,g_n)\geq 0,$ \be\label{e37}
g_{n+1}\leq (1-h_n\gamma_n)g_n+h_n\alpha(n,g_n)+h_n\beta_n,\quad
h_n>0,\ 0<h_n\gamma_n<1,\ee and $\alpha(n,g_n)\geq \alpha(n,q_n)$ if
$g_n\geq q_n$. If there exists a sequence $\mu_n> 0$ such that
\be\label{e38} \alpha(n,\frac{1}{\mu_n}) +\beta_n\leq
\frac{1}{\mu_n}(\gamma_n-\frac{\mu_{n+1}-\mu_n}{h_n\mu_n}),
\ee
and
\be\label{e39} g_0\leq \frac{1}{\mu_0}, \ee 
then 
\be\label{e40}
0\leq g_n\leq \frac{1}{\mu_n}\qquad \forall n\geq 0. \ee
\end{thm}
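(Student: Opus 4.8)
The plan is to argue by induction on $n$, exploiting the fact that the structural hypothesis \eqref{e38} is the exact discrete analogue of condition \eqref{e3} and is engineered precisely so that the bound $g_n\le 1/\mu_n$ propagates from step $n$ to step $n+1$. The base case $n=0$ is nothing but assumption \eqref{e39}, and since $g_n\ge 0$ is a standing hypothesis of the theorem, only the upper bound in \eqref{e40} needs to be carried through the induction.

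For the inductive step I would assume $0\le g_n\le \frac{1}{\mu_n}$ and substitute this into the recurrence \eqref{e37}. The crucial observation is that the coefficient $1-h_n\gamma_n$ is strictly positive, thanks to the restriction $0<h_n\gamma_n<1$; hence the induction hypothesis yields $(1-h_n\gamma_n)g_n\le (1-h_n\gamma_n)/\mu_n$ with no reversal of the inequality. Since $\alpha(n,\cdot)$ is nondecreasing and $g_n\le 1/\mu_n$, one also has $\alpha(n,g_n)\le \alpha(n,\frac{1}{\mu_n})$. Combining these in \eqref{e37} gives
\bee
g_{n+1}\le \frac{1-h_n\gamma_n}{\mu_n}+h_n\left(\alpha(n,\tfrac{1}{\mu_n})+\beta_n\right).
\eee
Invoking \eqref{e38} to estimate the bracketed term and cancelling the two contributions $\pm\,h_n\gamma_n/\mu_n$ then reduces the bound to
\bee
g_{n+1}\le \frac{1}{\mu_n}-\frac{\mu_{n+1}-\mu_n}{\mu_n^2},
\eee
which is the discrete echo of the term $-\dot\mu/\mu^2$ appearing in \eqref{e27}.

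It then remains to verify the purely algebraic inequality $\frac{1}{\mu_n}-\frac{\mu_{n+1}-\mu_n}{\mu_n^2}\le \frac{1}{\mu_{n+1}}$. Clearing the strictly positive denominator $\mu_n^2\mu_{n+1}$, this is seen to be equivalent to $0\le (\mu_n-\mu_{n+1})^2$, which holds automatically; consequently $g_{n+1}\le 1/\mu_{n+1}$, closing the induction and establishing \eqref{e40}.

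Because every manipulation is elementary, I do not anticipate a genuine obstacle. The single point that demands care is the sign of $1-h_n\gamma_n$: if this coefficient were negative, the induction hypothesis could not be fed through \eqref{e37} without flipping the inequality, and the whole argument would collapse — which is exactly why the hypothesis $0<h_n\gamma_n<1$ is imposed. The only other mild subtlety is the recognition that the final comparison collapses to a perfect square, so that it holds for an arbitrary positive sequence $\mu_n$ regardless of whether it is increasing or decreasing.
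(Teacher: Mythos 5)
Your proposal is correct and follows essentially the same route as the paper: induction on $n$, feeding the hypothesis $g_n\le 1/\mu_n$ through \eqref{e37} via the positivity of $1-h_n\gamma_n$ and the monotonicity of $\alpha$, invoking \eqref{e38}, and reducing the final comparison $\frac{2\mu_n-\mu_{n+1}}{\mu_n^2}\le \frac{1}{\mu_{n+1}}$ to the perfect square $-(\mu_n-\mu_{n+1})^2\le 0$. No gaps.
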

\begin{proof}
For $n=0$ inequality \eqref{e40} holds because of \eqref{e39}.
Assume that it holds for all $n\leq m$ and let us check that then it holds
for $n=m+1$. If this is done, \thmref{thm2} is proved. Using the
inductive assumption, one gets: \bee g_{m+1}\leq
(1-h_m\gamma_m)\frac{1}{\mu_m}+h_m\alpha(m,\frac{1}{\mu_m})+h_m\beta_m.
\eee 
This and inequality \eqref{e38} imply: \bee\begin{split} g_{m+1}&\leq
(1-h_m\gamma_m)\frac{1}{\mu_m}+h_m\frac{1}{\mu_m}(\gamma_m-\frac{\mu_{m+1}-
\mu_m}{h_m\mu_m})\\
&=\frac{\mu_mh_m-\mu_mh^2_m\gamma_m+h^2_m\gamma_m\mu_m-h_m\mu_{m+1}+h_m\mu_m
}{\mu^2_mh_m}\\
&=\frac{2\mu_mh_m-h_m\mu_{m+1}}{\mu_m^2h_m}=\frac{2\mu_m-\mu_{m+1}}{\mu^2_m}=
\frac{1}{\mu_{m+1}}+\frac{2\mu_m-\mu_{m+1}}{\mu^2_m}-\frac{1}{\mu_{m+1}}.
\end{split}\eee 
The proof is completed if one checks that
\bee \frac{2\mu_m-\mu_{m+1}}{\mu^2_m}\leq \frac{1}{\mu_{m+1}}, \eee
or, equivalently, that 
\bee 2\mu_m\mu_{m+1}-\mu^2_{m+1}-\mu^2_m\leq 0. 
\eee 
The last
inequality is obvious since it can be written as 
$$-(\mu_m-\mu_{m+1})^2\le 0.$$
\thmref{thm2} is proved.
\end{proof}
\thmref{thm2} was formulated in \cite{R593} and proved in \cite{R558}.
We included for completeness a proof, which is different from the one in 
\cite{R558} only slightly.

\newpage 


\begin{thebibliography}{00}

\bibitem{DK} Yu. L. Daleckii and M. G. Krein, {\it Stability of solutions 
of 
differential equations in Banach spaces}, Amer. Math. Soc., Providence, 
RI, 1974.

\bibitem{H} P. Hartman, \textit{Ordinary differential equations},
J. Wiley, New York, 1964.

\bibitem{R593}
N.S. Hoang and A.~G.~Ramm, DSM of Newton-type for solving operator
equations $F(u)=f$ with minimal smoothness assumptions on $F$,
{\it International Journ. Comp.Sci. and Math.} (IJCSM),
3, N1/2, (2010), 3-55.

\bibitem{R558} N. S. Hoang and A.~G.~Ramm,
 A nonlinear inequality and applications,
Nonlinear Analysis: Theory, Methods and Appl.,
71, (2009), 2744-2752.



\bibitem{R499}
A.~G.~Ramm, \textit{Dynamical systems method for solving operator
equations}, Elsevier, Amsterdam, 2007.

\bibitem{R129} A. G. Ramm,
Stationary regimes in passive nonlinear
networks, in the book  \textit{ Nonlinear Electromagnetics}, Editor 
P. Uslenghi,
Acad. Press, New York, 1980, pp. 263-302.

\bibitem{R118} A. G. Ramm, {\it Theory and applications of some new
classes of
 integral equations}, Springer-Verlag, New York, 1980.


\end{thebibliography}
\end{document}